%% file: Quasi_affine.tex
\documentclass[a4paper,10pt,oneside]{amsart}
\usepackage[a4paper, textwidth=15.5cm]{geometry}
\usepackage{amsmath,amsthm,amssymb,amsfonts,extarrows}
\usepackage{enumerate,amscd,amsxtra,MnSymbol}
\usepackage{mathrsfs}
\usepackage{color}
\usepackage[cmtip,all]{xy}
\usepackage{eucal}
\usepackage{bbold}
\usepackage{ upgreek }
\usepackage{paralist}
\usepackage{tikz-cd}
\usepackage{stackrel}
\usepackage{extarrows}

\usepackage[dvipsnames]{xcolor} 
\usepackage[backref=page]{hyperref}
\hypersetup{
    colorlinks=true,
    linkcolor=PineGreen, 
    citecolor=BurntOrange,
    urlcolor=CadetBlue,
    }

\usepackage{thmtools} % for correct autorefs
\usepackage{quiver}
\usepackage{tikz-cd}
\usepackage{csquotes}
\usepackage{pdfpages}

\input{Comands}

\title{Quasi-affine schemes and singly compactly generated $t$-structures}
\author{Giovanni Rossanigo}

\begin{document}
\maketitle
\begin{abstract}
    We show that  for a quasi-compact quasi-separated scheme $X$ with an ample family of line bundles, the connective half $\QCoh(X)_{\geq0}$ of the standard $t$-structure  on the derived $\infty$-category of quasi-coherent sheaves is compactly generated by a connective perfect object if and only if $X$ is quasi-affine.
\end{abstract}

\begin{figure}[h]
\centering
\includegraphics[scale=0.2]{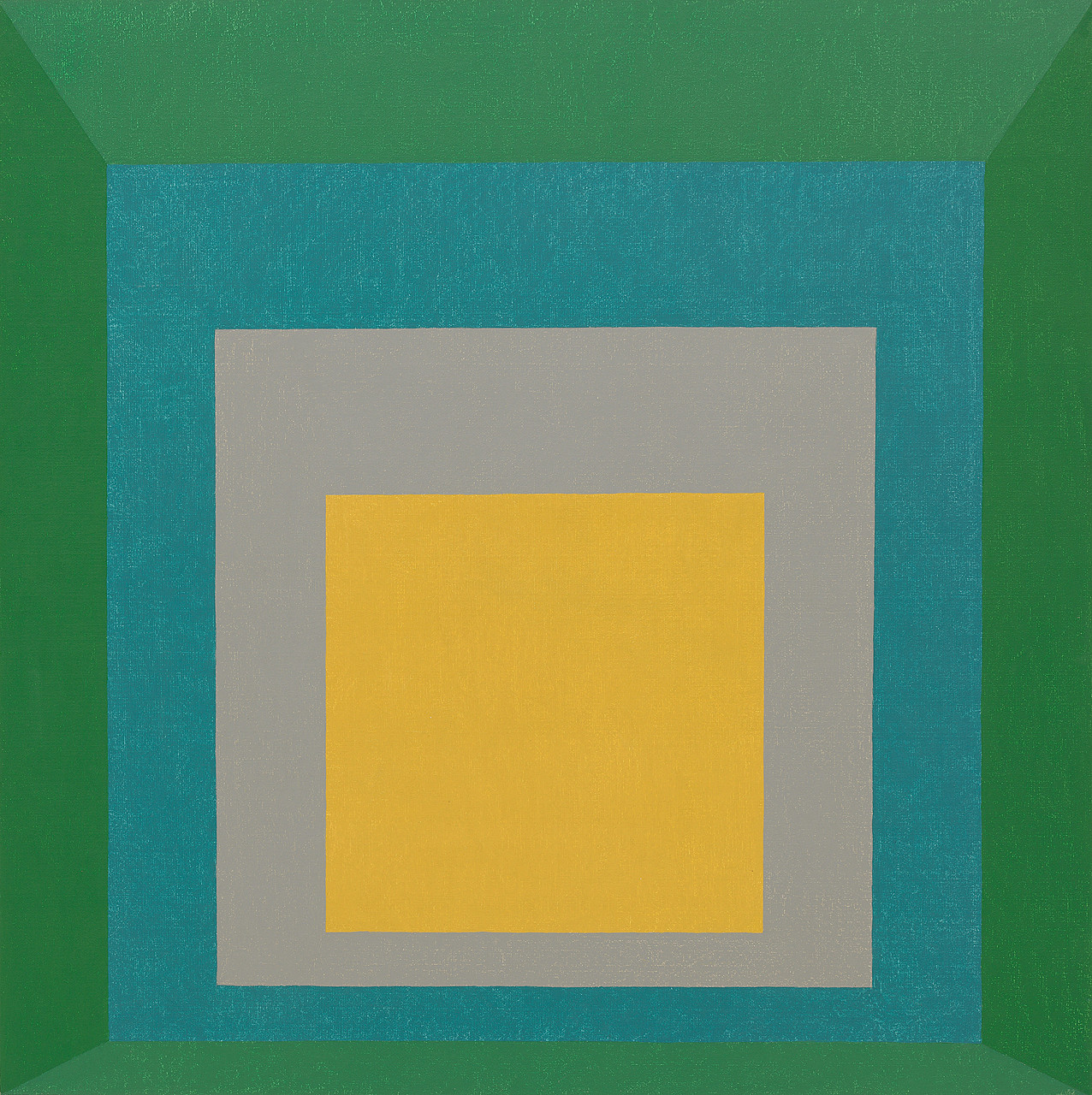}
\caption{\emph{Homage to the square: apparition}, Josef Albers, 1959.}
\end{figure}
\newpage
\tableofcontents

\section{Introduction}
In recent years there has been considerable attention to presentable stable $\infty$-categories equipped with a single compact generator.
For such categories, \cite[Theorem A.1]{alonso2002constructiontstructuresequivalencesderived} says that there exists a $t$-structure whose connective half is compactly generated  by the given single compact generator, in the sense that the connective half is the smallest full subcategory closed under extensions and  small colimits containing the generator. 
A well-known result of Bondal and Van den Bergh \cite[Theorem 3.1.1]{bondal2002generatorsrepresentabilityfunctorscommutative} shows the derived stable $\infty$-category of quasi-coherent sheaves $\QCoh(X)$ on a quasi-compact quasi-separated scheme $X$ admits such compact generator. 
For applications, it suffices to know that this $t$-structure is  \emph{equivalent} (in the sense of \cite[Definition 0.18]{neeman2025triangulatedcategoriessinglecompact}) to the standard $t$-structure, a result proved in \cite[Theorem 3.2]{neeman2024boundedtstructurescategoryperfect}.  
In any case, it is interesting to understand if the  two $t$-structures agree, or, in equivalent terms, if the standard $t$-structure is  compactly generated by a single object.
Our main result shows that, under a mild assumption, this occurs precisely when the scheme is quasi-affine.
\begin{theorem}
    Let $X$ be a quasi-compact quasi-separated scheme.
    Then $X$ is quasi-affine if and only if it admits an ample family of line bundles and the connective half $\QCoh(X)_{\geq 0}$ is compactly generated by a connective perfect object.
\end{theorem}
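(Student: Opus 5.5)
The argument splits into two directions.

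\emph{Quasi-affine implies the conditions.} Assume $X$ is quasi-affine. Then $\mathcal{O}_X$ is ample, so $X$ has an ample family of line bundles. Choose an open immersion $j\colon X\hookrightarrow \operatorname{Spec} A$ with $A=\Gamma(X,\mathcal{O}_X)$. We may cover $X$ by finitely many basic opens $D(f_1),\dots,D(f_n)$, which are affine, and we take the Koszul complex $K=K(f_1,\dots,f_n)$. The natural candidate for the generator is $G=\mathcal{O}_X$. The aim is to show that $\mathcal{O}_X$ compactly generates $\QCoh(X)_{\geq0}$, i.e.\ that every connective $F$ lies in the smallest subcategory closed under colimits and extensions containing $\mathcal{O}_X$.

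\begin{enumerate}[(i)]
\item Write $F$ as a colimit of its Postnikov truncations from below and reduce to $F$ a quasi-coherent sheaf in degree $0$.
\item By quasi-affineness, $F$ is generated by its global sections, so there is a surjection $\bigoplus \mathcal{O}_X\to F$.
\item Iterating this surjection gives a resolution $\cdots\to P_1\to P_0\to F$ by free sheaves.
\item Realize $F$ as the geometric realization of $P_\bullet$, a colimit of iterated extensions of shifts $P_k[k]$ with $k\ge 0$. Each such shift lies in the generated subcategory, since suspension is a colimit.
\end{enumerate}

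Hence $F$ lies in the generated subcategory. The substantive input is step (ii), that an affine-embedded $X$ has globally generated quasi-coherent sheaves; this is the standard characterization of quasi-affine schemes by ampleness of $\mathcal{O}_X$.

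\emph{The conditions imply quasi-affine.} Let $G$ be a connective perfect generator of $\QCoh(X)_{\geq0}$. The key observation is that the corresponding $t$-structure has coconnective part
\[ \{F : \operatorname{Hom}(G[n],F)=0 \text{ for all } n\ge0\} = \QCoh(X)_{\le -1}. \]
Take any quasi-coherent sheaf $M$ in degree $0$. The plan is to show:

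\begin{enumerate}[(a)]
\item $\pi_0\operatorname{Hom}(G,M)\neq0$ whenever $M\neq0$.
\item Consequently $\operatorname{Hom}(\pi_0 G, M)\neq 0$, because $G$ is connective and $M$ is in the heart.
\end{enumerate}

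So the finitely presented sheaf $E=\pi_0G$ detects every nonzero quasi-coherent sheaf, and by cocompleteness every sheaf is a quotient of a sum of copies of $E$.

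To get quasi-affineness I would use the ample family $L_1,\dots,L_r$. Tensor powers $L^{\otimes -m}$ with large $m$ must be quotients of sums of copies of $E$. A cleaner route is to use the higher homotopy in the definition of the generated subcategory: every connective object is built from $G$. In particular $\mathcal{O}_X$ and the $L_i^{\otimes m}$ for all $m\in\mathbb{Z}$ are built from $G$. Since $G$ is perfect and connective, it has bounded Tor amplitude. One then wants a uniform bound on the degrees of the $L$'s needed to generate $G$, and from this to deduce that $\mathcal{O}_X$ generates, i.e.\ that $\mathcal{O}_X$ is ample. I would try two approaches:

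\begin{itemize}
\item Show $\operatorname{Hom}(G,-)$ is t-exact up to a shift. Then $\operatorname{Hom}(G,-)$ is conservative on the heart, and the negative cohomology of $R\Gamma$ is controlled.
\item Use that $E^\vee\otimes E$ and its relatives give finitely many line bundles whose sections detect everything.
\end{itemize}

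The hardest step, which I expect to be the main obstacle, is turning ``a single finitely presented $E$ generates $\QCoh(X)^\heartsuit$ and every connective object is built from $G$'' into ampleness of $\mathcal{O}_X$. For this I would argue on the finitely many open sets $X_s$ with $s\in\Gamma(L_i^{m})$ from the ample family. The goal there is to show that the functions $\Gamma(X,\mathcal{O}_X)$ separate points and give affine neighborhoods. The mechanism would be the failure of $L_i^{-m}$ for large $m$ to receive nonzero maps from $E$ unless sections of $\mathcal{O}_X$ already do the job, together with the characterization of quasi-affine schemes as those for which $\mathcal{O}_X$ is ample.
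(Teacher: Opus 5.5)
\textbf{Direction 1.} This direction is essentially sound, but step (i) is misjustified. The Postnikov truncations $\tau_{\leq n}F$ exhibit $F$ as a limit, not a colimit, and $\langle\mathcal{O}_X\rangle_{\geq0}$ is only closed under colimits and extensions. Reduce to the heart instead by using that $\QCoh(X)_{\geq0}$ is generated under filtered colimits by connective perfect complexes, which are bounded. Alternatively, use the paper's shortcut $F\simeq j^*\tau_{\geq0}j_*F$ for the $t$-exact localization $j^*$ along $X\into\operatorname{Spec}\Gamma(X,\mathcal{O}_X)$. The Koszul complex plays no role.

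\textbf{Direction 2.} Here there is a genuine gap, which you concede: nothing in your sketch turns information about $\pi_0G$ into quasi-affineness. Your first suggested route cannot work, since $\hom(G,-)$ is not $t$-exact up to shift even for $X=\mathbb{A}^2\setminus\{0\}$ and $G=\mathcal{O}_X$, where $H^1(X,\mathcal{O}_X)\neq0$.

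The paper proceeds differently. The ample family gives the resolution property, hence a strict model $G\simeq(E_d\to\dots\to E_0)$. Stupid truncations show that the vector bundle $E_G=\bigoplus_iE_i$ also generates $\QCoh(X)_{\geq0}$. The paper then asserts that $E_G$ generates the heart, so that every finite-type sheaf is a quotient of some $E_G^{\oplus n}$. It concludes from this $1$-resolution property via \cite[Proposition 3.4]{deshmukh2020quasiaffineness1resolutionproperty}. Your $\pi_0G$ is merely finitely presented, so even if it generated the heart it would not yield the $1$-resolution property.

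\textbf{The shared step fails.} Your inference ``$\Hom(\pi_0G,M)\neq0$ for all $M\neq0$, hence every sheaf is a quotient of copies of $\pi_0G$'' is invalid, since a map $\pi_0G\to M/M'$ need not lift to $M$. The paper's version of it, \autoref{remark: generator and heart}, on which \autoref{lemma: E_G generator}(2) rests, fails too.

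Let $X=U_1\cup U_2$ be the affine line with doubled origin over a field $k$, with $U_i=\operatorname{Spec}k[t]$ and $U_1\cap U_2=\operatorname{Spec}k[t^{\pm1}]$. Let $L$ be the line bundle whose sections are pairs $(f_1,f_2)$ with $f_2=tf_1$ on the overlap.
\begin{itemize}
\item The sections $(1,t)$ of $L$ and $(t,1)$ of $L^{-1}$ have non-vanishing loci $U_1$ and $U_2$, so $\{L,L^{-1}\}$ is an ample family.
\item Let $\mathcal{M}=(M_1,M_2)$ be quasi-coherent and $0\neq m_1\in M_1$ (or symmetrically in $M_2$). Either $m_1$ is $t$-power torsion and $(m_1,0)$ is a nonzero global section, or $m_1$ maps to $m_2/t^b$ in $M_1[t^{-1}]\cong M_2[t^{-1}]$ and $(t^bm_1,m_2)$ is one. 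So $\Gamma(X,-)$ detects nonzero sheaves.
\item By Mayer--Vietoris, $R\Gamma(X,-)$ has amplitude $[0,1]$. Hence for nonzero $C\in\QCoh(X)_{\geq0}$ with lowest homotopy sheaf $\pi_mC$, the map $\pi_mR\Gamma(X,C)\to\Gamma(X,\pi_mC)\neq0$ is onto, so $C\notin\langle\mathcal{O}_X\rangle_{\leq-1}$.
\item Applying this to the cofibre of the $\langle\mathcal{O}_X\rangle$-connective cover of any $F\in\QCoh(X)_{\geq0}$ gives $\QCoh(X)_{\geq0}=\langle\mathcal{O}_X\rangle_{\geq0}$.
\end{itemize}
Yet every global section of $L$ vanishes at the second origin, so $\mathcal{O}_X$ does not generate the heart. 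Also $X$ is not separated, hence not quasi-affine.

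So, with generation under colimits and extensions as the paper stipulates, the implication you are trying to prove is false, and neither your plan nor the paper's argument can be completed. The paper's argument does go through if ``generated'' means generated under small colimits alone. In that case the class of objects admitting a $\pi_0$-epimorphism from a sum of copies of $G$ is closed under colimits (though not under extensions), and the stupid-truncation step only uses cofibres.
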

Notice that every quasi-affine scheme admits an ample line bundle, the structure sheaf.
The key input on the \enquote{geometric} side is the \emph{$1$-resolution property}.
Under the assumption that $X$ admits an ample family of line bundles, \cite[Proposition 3.4]{deshmukh2020quasiaffineness1resolutionproperty} shows that the $1$-resolution property forces $X$ to be quasi-affine.
On the \enquote{categorical} side, starting from a connective perfect generator $G\in \QCoh(X)_{\geq 0}$, the ample-family hypothesis implies that $G$ can be chosen \emph{strict}, that is, represented by a bounded
complex of vector bundles. 
From such a representative we extract an associated vector bundle $E_G=\bigoplus_i E_i$.
We then show that $E_G$ generates the heart $\QCoh(X)^\heartsuit$, and a simple argument implies that every  object of finite-type of $\QCoh(X)^\heartsuit$ is a quotient of a finite sum of copies of $E_G$.
In other words, $E_G$ is a \emph{special} vector bundle and $X$ satisfies the $1$-resolution property, so the conclusion follows from \cite[Proposition 3.4]{deshmukh2020quasiaffineness1resolutionproperty}.

Finally, our result is compatible with recent nonexistence results: for instance, \cite[Theorem~A.1]{bhaduri2025nonexistencesinglycompactlygenerated} proves that if $X$ is a smooth projective curve over the complex numbers, then the standard
connective half $\QCoh(X)_{\geq 0}$ is not singly compactly generated. Since such schemes admit an ample family of line bundles but are not quasi-affine, this phenomenon is predicted by our characterization.

\subsection*{Organization.}
In \autoref{section: prestable} we recall basic facts on Grothendieck prestable $\infty$-categories and recall the notion of finite type objects in the heart. 
In \autoref{section: 1 resolution} we relate strict compact generators to the $1$-resolution
property and prove the main theorem. 
In \autoref{section: examples} we  discuss the optimality of the main result.

\subsection*{Conventions}
We freely use the language of stable $\infty$-categories and $t$-structures on them, which we  grade homologically. 
We denote by $\hom_\mathcal{C}(-,-)$ the mapping spectrum of a  stable $\infty$-category $\mathcal{C}$.
Given a scheme $X$, we denote by $\QCoh(X)$ the derived stable $\infty$-category of quasi-coherent sheaves on $X$.
We will also denote by $\QCoh(X)_{\geq0}$ the connective half of the standard $t$-structure, and by $\QCoh(X)^\heartsuit$ the Grothendieck abelian $1$-category of discrete quasi-coherent sheaves on $X$. 
If $A$ is a ring, we will denote by $\Mod_A$ the  derived stable $\infty$-category of modules over $A$.

\subsection*{Acknowledgments} 
I wrote this note back in the late winter, during repeated trips between Milan and Turin, all posthumous to February 5th, the day my uncle \emph{Andrea} lost his battle against cancer. 
I dedicate these pages to his memory, and to everyone who continues to face this illness with courage.

\section{Grothendieck prestable $\infty$-categories}\label{section: prestable}
We begin with some reminders on Grothendieck prestable categories that the reader can find in \cite[Appendix C]{Lurie-SAG}.
\begin{remark}
    Recall that  an $\infty$-category  is \emph{prestable} if it is pointed, it admits finite colimits, the suspension functor is fully-faithful and every span $0\to \susp x\xleftarrow{} y$ can be completed to a pullback, and the resulting square is also a pushout. 
    Among prestable categories, the one admitting finite limits may be characterized as the connective half of a non-canonical $t$-structure on certain stable $\infty$-categories (usually taken to be the Spanier-Whitehead stabilization or the spectrum objects stabilization).
    Given a prestable $\infty$-category $\mathcal{C}$, there is a notion of \emph{heart} $\mathcal{C}^\heartsuit$, defined to be the full subcategory of discrete objects.
    In the case where $\mathcal{C}$ admits finite limits, the heart $\mathcal{C}^\heartsuit$ may be identified with the heart of the non-canonical $t$-structure. 
\end{remark}
\begin{remark}
    Recall that a prestable $\infty$-category $\mathcal{C}$ is \emph{Grothendieck prestable} if it is presentable and filtered colimits are left exact.
    In this case, the heart $\mathcal{C}^\heartsuit$ is a Grothendieck abelian $1$-category. 
    Furthermore, every Grothendieck abelian $1$-category arises as the heart of a Grothendieck prestable $\infty$-category. 
    A great number of examples of Grothendieck prestable $\infty$-categories is given by prestable compactly generated $\infty$-categories. 
\end{remark}
The main example for us is the following.
\begin{example}
    Let $X$ be a quasi-compact quasi-separated scheme and let $\QCoh(X)_{\geq0}$ be the connective half of the standard $t$-structure on  the derived $\infty$-category $\QCoh(X)$ of quasi-coherent sheaves.
    Then $\QCoh(X)_{\geq0}$ is Grothendieck prestable, being compactly-generated by the connective perfect objects. 
    See \cite[Proposition 9.6.1.2]{Lurie-SAG}.
\end{example}
The problem of compact generation of $t$-structure is well understood.
\begin{remark}\label{remark: comp gene}
     Let $\mathcal{C}$ be a presentable stable $\infty$-category and let $E$ be an object. 
     Then \cite[Proposition 1.4.4.11]{Lurie-HA} states that there exists an accessible $t$-structure whose connective half  $\langle E\rangle_{\geq0}$ is the smallest full subcategory which contains $E$ and is closed under extensions and small colimits. 
     If $E$ is furthermore compact, then $\langle E\rangle_{\geq0}$ is Grothendieck prestable, since it is compactly generated by the closure under retracts of the smallest full subcategory which contains $E$ and is closed under extensions and finite colimits.
     In other terms, $E$ is a compact generator for $\langle E\rangle_{\geq0}$ and the $t$-structure is \emph{singly compactly generated}.
\end{remark}
In the case of compact generators, the previous observation can be improved.
\begin{remark}\label{remark: comp gene 1}
    Let $\mathcal{C}$ be a presentable stable $\infty$-category with a single compact generator $G$. 
    Then \cite[Proposition C.6.3.1]{Lurie-SAG} states that there exists an accessible, compatible with filtered colimits, right-complete $t$-structure whose connective half $\langle G\rangle_{\geq0}$ is the smallest full subcategory which contains $G$ and is closed under extensions and small colimits. 
    This connective half is Grothendieck prestable (being compactly generated by the closure under retracts of the smallest full subcategory which contains $G$ and is closed under extensions and finite colimits). 
    Thus $G$ is a compact generator for $\langle G\rangle_{\geq0}$.
    The negative half is instead given by those objects $x\in\mathcal{C}$ such that $\pi_{n}\hom_\mathcal{C}(G,x)\simeq0$ for $n\geq 1$.
\end{remark}

\begin{remark}\label{remark: generator and heart}
    Let $\mathcal{C}$ be a Grothendieck prestable $\infty$-category. 
    Recall that a generating subcategory is a full subcategory $\mathcal{C}_0\subseteq\mathcal{C}$ such that for every object $x\in\mathcal{C}$,  there exists a collection of maps $p_i: C_i\to x$, where each $C_i$ belongs to $\mathcal{C}_0$ and the induced  map $\oplus_{i\in I}C_i\to x$ induces a $\pi_0$-epimorphism in $\mathcal{C}^\heartsuit$.
    Notice that if $\mathcal{C}_0\subseteq\mathcal{C}$ generates under colimits and extensions, then it is generating.
    Particular importance is the case where $\mathcal{C}_0$ consists of a single object $G$ (which is not necessarily compact); this is the case of \autoref{remark: comp gene} and \autoref{remark: comp gene 1}.
    In this case the truncation $\pi_0G$ is a generator for the Grothendieck abelian $1$-category $\mathcal{C}^\heartsuit$.
\end{remark}

We conclude this section with an observation.
\begin{definition}
     Let $\mathcal{C}^\heartsuit$ be a Grothendieck abelian $1$-category.
     An object $x\in\mathcal{C}^\heartsuit$ is called \emph{of finite type} if it satisfies the following equivalent conditions:
    \begin{enumerate}
    \item For every filtered diagram $(y_i)_{i\in I}$ in $\mathcal{C}^\heartsuit$ such that each transition map
    $y_i\to y_j$ is a monomorphism, the canonical map $\colim_{i\in I}\Hom_{\mathcal{C}^\heartsuit}(x,y_i)\to\Hom_{\mathcal{C}^\heartsuit}(x,\colim_{i\in I} y_i)$ is an isomorphism.
    \item If $(x_i)_{i\in I}$ is a filtered system of subobjects $x_i\subseteq x$ whose union is $x$, then $x=x_{i_0}$ for some $i_0\in I$.
    \end{enumerate}
    The equivalence of these conditions is shown in \cite[Theorem 1.8]{article}.
\end{definition}

We recall the following well known result.
\begin{lemma}\label{lemma: generators in Grothendieck abelian and finite type}
    Let $\mathcal{C}^\heartsuit$ be a Grothendieck abelian $1$-category and let $E\in\mathcal{C}^\heartsuit$ be a generator. 
    Then every object $x\in\mathcal{C}^\heartsuit$ of finite type admits an epimorphism $\oplus_{j\in J_0}E\to x$ from a finite direct sum.
\end{lemma}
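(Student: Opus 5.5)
The plan is to use that $E$ is a generator to get a possibly infinite epimorphism onto $x$, and then use the finite type condition (2) to cut it down to a finite subsum.

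Since $E$ is a generator of the Grothendieck abelian category $\mathcal{C}^\heartsuit$, the canonical map $\bigoplus_{f\in \Hom(E,x)} E\to x$, with one summand for each morphism $f\colon E\to x$, is an epimorphism. Indeed, its image $x'\subseteq x$ is a subobject through which every map $E\to x$ factors. If $x'\neq x$, the quotient $x\to x/x'$ would be nonzero. Since $E$ generates, some map $E\to x/x'$ would then be nonzero; the epimorphism $\bigoplus E\to x/x'$ would be nonzero, and because $E$ is projective-free we should instead argue as follows. The generating property means that for every proper subobject $x'\subsetneq x$ there is a map $E\to x$ not factoring through $x'$. Equivalently, the sum of the images of all maps $E\to x$ is $x$. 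This is the standard characterization of a generator, and I would take it as given.

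Let $I=\Hom(E,x)$ and consider the poset of finite subsets $F\subseteq I$, ordered by inclusion; it is filtered. For each $F$, let $x_F\subseteq x$ be the image of $\bigoplus_{f\in F}E\to x$. These subobjects form a filtered system, since $F\subseteq F'$ implies $x_F\subseteq x_{F'}$. Their union is $x$. To see this, note that in a Grothendieck category filtered colimits are exact. Hence the filtered colimit of the monomorphisms $x_F\hookrightarrow x$ is a monomorphism with image $\colim_F x_F$. This colimit is also the image of $\colim_F\bigoplus_{F}E=\bigoplus_I E$, so it is all of $x$ by the previous paragraph.

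By condition (2) in the definition of finite type, $x=x_{F_0}$ for some finite $F_0$. Therefore $\bigoplus_{f\in F_0}E\to x$ is an epimorphism, and $J_0=F_0$ is the required finite index set.

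The only delicate point is the identification of the union of the $x_F$ with the image of the infinite sum. This uses exactness of filtered colimits (AB5) to commute images with filtered colimits; the rest is formal.
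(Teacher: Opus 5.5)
Your proof is correct and follows essentially the same route as the paper: an epimorphism $\bigoplus_I E\to x$, the filtered system of images $x_F$ of the finite subsums, and condition (2) of the definition of finite type. Your AB5 argument that $\colim_F x_F = x$ only makes explicit a step the paper states informally. You should delete the aborted sentence about $E$ being ``projective-free''. Faithfulness of $\Hom(E,-)$, applied to the nonzero quotient map $x\to x/x'$, already produces a map $E\to x$ not factoring through $x'$, which is the contradiction you need.
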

\begin{proof}
    Since $E$ is a generator, there exists a set $I$ and an epimorphism $p:\bigoplus_{i\in I} E \twoheadrightarrow x$. 
    Define  $x_J:= \mathrm{im}(\bigoplus_{j\in J} E \xrightarrow{p} x)\subseteq x$ for every finite subset $J\subseteq I$.
    If $J\subseteq J'$, then $x_J\subseteq x_{J'}$, so the poset of finite subsets of $I$ indexes a filtered system of subobjects $(x_J)$ of $x$.
    Since $p$ is an epimorphism, the object $x$ is generated by the images of the individual summands $E\to \bigoplus_{i\in I}E \xrightarrow{p} x$, and any finite sum of such images is precisely some $x_J$. Thus  $\colim_J x_J \simeq x$, that is, the union of the $x_J$ is all of $x$.
    Now, since $x$ is of finite type, it follows that $x\simeq x_{J_0}$ for some finite $J_0\subset I$. By construction, the map $\bigoplus_{j\in J_0}E \to x$ has image $x_{J_0}\simeq x$, hence it is an epimorphism.
\end{proof}

\section{The $1$-resolution property}\label{section: 1 resolution}
Recall that a scheme $X$ has the \emph{resolution property} if every discrete quasi-coherent module of finite type is the quotient of a vector bundle. 
\begin{example}\label{example: ample family implies resolution property}
    Let $X$ be a scheme. 
    If $X$ has an ample family of line bundles, then  \cite[\href{https://stacks.math.columbia.edu/tag/0GMM}{Lemma 0GMM}]{stacks-project} implies that $X$ has the resolution property. 
\end{example}
\begin{remark}\label{remark: strict}
    For us   \cite[\href{https://stacks.math.columbia.edu/tag/0F8F}{Lemma 0F8F}]{stacks-project} is the main result: it states that, for a quasi-compact quasi-separated %\footnote{Here the separated assumption is needed to force $\QCoh$ to be equivalent to $D_\text{qc}$} 
    scheme with the resolution property, every perfect complex can be represented by a bounded complex of vector bundles. 
    In other terms, every perfect complex is \emph{strict}.
\end{remark}

We now need a stronger resolution property, which resolves every quasi-coherent module of finite type by a single vector bundle.
\begin{definition}
    Let $X$ be a quasi-compact quasi-separated scheme.
    We will say that $X$ has the \emph{$1$-resolution property} if there exists a vector bundle $E$ such that every discrete quasi-coherent sheaf of finite type on $X$ is a quotient of $\oplus_{i\in I}E$ for some finite set $I$.
    In this case, $E$ is called a \emph{special vector bundle}.
\end{definition}

\begin{remark}\label{remark: proposition 3.4 paper}
    Let $X$ be a quasi-compact scheme admitting an ample family of line bundles. 
    If $X$ has the $1$-resolution property then it is  quasi-affine.
    This is \cite[Proposition 3.4]{deshmukh2020quasiaffineness1resolutionproperty}.
\end{remark}

\begin{construction}
    Let $X$ be a  quasi-compact quasi-separated scheme and let $G\in\QCoh(X)_{\geq0}$ be a strict compact generator. 
    Pick an equivalence 
    \[
    G\simeq (0\to E_d\to \dots\to E_1\to E_0\to 0)
    \]
    with a bounded complex of vector bundles. 
    Let $E_G=\bigoplus_{i=0}^dE_i\in\QCoh(X)^\heartsuit$ and notice that $E_G$ depends on the choice of the above equivalence.
    We will refer to $E_G$ as a \emph{vector bundle associated to $G$}.
\end{construction}

\begin{lemma}\label{lemma: E_G generator}
     Let $X$ be a  quasi-compact quasi-separated scheme. 
     Let $G\in\QCoh(X)_{\geq0}$ be a strict compact generator for the connective half and let $E_G$ be an associated vector bundle. Then:
     \begin{enumerate}
        \item There is an equality $\QCoh(X)_{\geq0}=\langle E_G\rangle_{\geq0}$.
        \item The object $E_G$ is a generator of $\QCoh(X)^\heartsuit$. 
     \end{enumerate}
\end{lemma}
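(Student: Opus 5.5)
Part (1) comes down to two inclusions; part (2) then follows formally from \autoref{remark: generator and heart}.

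For $\langle E_G\rangle_{\geq0}\subseteq \QCoh(X)_{\geq0}$: each $E_i$ is a vector bundle, hence a discrete connective object of $\QCoh(X)$. So $E_G$ lies in $\QCoh(X)_{\geq0}$. The latter is closed under extensions and small colimits, since it is the connective half of a $t$-structure. By minimality of $\langle E_G\rangle_{\geq0}$ (\autoref{remark: comp gene}), the inclusion follows.

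For the reverse inclusion it suffices to show $G\in\langle E_G\rangle_{\geq0}$. Then $\langle G\rangle_{\geq0}\subseteq\langle E_G\rangle_{\geq0}$, and $\langle G\rangle_{\geq0}=\QCoh(X)_{\geq0}$ because $G$ compactly generates the connective half. To place $G$ in $\langle E_G\rangle_{\geq0}$, I would use the brutal (stupid) filtration of the complex $0\to E_d\to\dots\to E_0\to0$. Write $\sigma_{\leq k}G$ for the subcomplex $E_k\to\dots\to E_0$. There are cofiber sequences
\[
\sigma_{\leq k-1}G\to\sigma_{\leq k}G\to \Sigma^kE_k,
\]
with $\sigma_{\leq 0}G=E_0$ and $\sigma_{\leq d}G=G$. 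Each $E_k$ is a retract of $E_G$, and $\langle E_G\rangle_{\geq0}$ is closed under retracts, since a retract is a sequential colimit of the idempotent. So $E_k\in\langle E_G\rangle_{\geq0}$. Since $\langle E_G\rangle_{\geq0}$ is closed under suspensions (these are colimits), $\Sigma^kE_k$ lies there too for $k\geq0$. Induction on $k$ using extension-closure then gives $\sigma_{\leq k}G\in\langle E_G\rangle_{\geq0}$ for all $k$, in particular $G$.

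One bookkeeping point needs care here: identifying the equivalence $G\simeq(E_\bullet)$ in $\QCoh(X)$ with the complex object, and the brutal truncations with the stated cofiber sequences. This is standard for bounded complexes of flat objects, where the totalization is computed termwise. I expect it to be the only mildly delicate step, handled by the usual identification of bounded complexes in the heart with iterated extensions of their shifted terms.

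For (2), by (1) the singleton $\{E_G\}$ generates $\QCoh(X)_{\geq0}$ under colimits and extensions. By \autoref{remark: generator and heart} it is therefore a generating subcategory, and $\pi_0E_G$ is a generator of $\QCoh(X)^\heartsuit$. Since $E_G$ is already discrete, $\pi_0E_G\simeq E_G$, which proves the claim.
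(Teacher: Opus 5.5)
Your proposal is correct and follows the paper's proof essentially step for step. You get the inclusion $\langle E_G\rangle_{\geq0}\subseteq\QCoh(X)_{\geq0}$ by minimality, the reverse inclusion by induction along the stupid filtration with cofibre sequences $\sigma_{\leq k-1}G\to\sigma_{\leq k}G\to\Sigma^kE_k$, and (2) from \autoref{remark: generator and heart} using that $E_G$ is discrete. Your extra justifications of closure under retracts and suspensions only make explicit what the paper leaves implicit, and the bookkeeping step needs no flatness: the stupid truncations form a degreewise split short exact sequence of complexes, which already gives a cofibre sequence.
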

\begin{proof}
    Consider point $(1)$. 
    Since $\QCoh(X)_{\geq0}$ is closed under extension and small colimits and contains $E_G$, it follows that $\langle E_G\rangle_{\geq0}\subseteq \QCoh(X)_{\geq0}=\langle G\rangle_{\geq0}$.
    For the converse, it suffices to show that $G$ lies in $\langle E_G\rangle_{\geq0}$.
    The proof now goes by induction on $0\leq i\leq d$, where $d$ is the coconnectivity of $G$.
    Let $\sigma_{\leq i}$ be the stupid truncation  and consider the cofibre sequence
    \[
     \sigma_{\leq i-1}G\to\sigma_{\leq i}G\to \susp^i E_i.
    \]
    If $i=0$ then the connectivity of $G$ implies that $\sigma_{\leq -1}G\simeq 0$, so that $\sigma_{\leq G}\simeq E_0$ belongs to $\langle E_G\rangle_{\geq0}$, being $E_0$ a retract of $E_G$.
    The inductive hypothesis assumes now $\sigma_{\leq i-1}G\in \langle E_G\rangle_{\geq0}$. 
    Since the last term belongs to $\langle E_G\rangle_{\geq0}$, being a suspension of a retract of $E_G$, it follows that $\sigma_{\leq i}G\in\langle E_G\rangle_{\geq0}$, being $\langle E_G\rangle_{\geq0}$ closed under extensions. 
    Thus $G\simeq\sigma_{\leq d}G$ belongs to $\langle E_G\rangle_{\geq0}$.

    For $(2)$, since point $(1)$ shows that $E_G$ is a generator for $\QCoh(X)_{\geq0}$, it suffices to apply \autoref{remark: generator and heart} to deduce that $E_G\simeq \pi_0E_G$ is a generator for $\QCoh(X)^\heartsuit$.
\end{proof}
The following is the main result of these pages. 
\begin{lemma}\label{lemma: 1-resolution property}
    Let $X$ be a  quasi-compact quasi-separated scheme. 
    Let $G\in\QCoh(X)_{\geq0}$ be a strict compact generator and let $E_G$ be the associated vector bundle.
    Then $E_G$ is a special vector bundle for $\QCoh(X)^\heartsuit$.
    In particular, $X$ has the $1$-resolution property.
\end{lemma}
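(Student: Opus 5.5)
The plan is to combine \autoref{lemma: E_G generator} with \autoref{lemma: generators in Grothendieck abelian and finite type}; almost no new input is needed. First, I would invoke point $(2)$ of \autoref{lemma: E_G generator}: since $G$ is a strict compact generator of $\QCoh(X)_{\geq0}$, the associated vector bundle $E_G=\bigoplus_{i=0}^d E_i$ is a generator of the Grothendieck abelian $1$-category $\QCoh(X)^\heartsuit$. Concretely, this means that for every discrete quasi-coherent sheaf $x$ there is a set $I$ and an epimorphism $\bigoplus_{i\in I}E_G\twoheadrightarrow x$ in $\QCoh(X)^\heartsuit$. This follows from \autoref{remark: generator and heart}, using that $E_G$ is discrete, being a vector bundle.

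Next, I would check that the two notions of ``finite type'' agree. For a discrete quasi-coherent sheaf on a quasi-compact quasi-separated scheme, being of finite type as an $\mathcal{O}_X$-module (locally a quotient of $\mathcal{O}^n$) coincides with being an object of finite type in the categorical sense of the definition above. The relevant direction is that a quasi-coherent module of finite type satisfies condition $(2)$: given a filtered union $\bigcup x_j=x$ of quasi-coherent subsheaves, cover $X$ by finitely many affines $U_1,\dots,U_n$. On each $U_k$ the module $x(U_k)$ is finitely generated, so it is already contained in some $x_j(U_k)$. Filteredness then gives a single $j_0$ that works for all $k$, hence $x=x_{j_0}$. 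This step uses only quasi-compactness and the fact that quasi-coherent subsheaves and filtered unions are computed affine-locally.

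Finally, I would apply \autoref{lemma: generators in Grothendieck abelian and finite type} with $E=E_G$ to any discrete quasi-coherent sheaf of finite type $x$. This yields a finite set $J_0$ and an epimorphism $\bigoplus_{j\in J_0}E_G\twoheadrightarrow x$, which is exactly the requirement that $E_G$ be a special vector bundle. Hence $X$ has the $1$-resolution property.

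The only point that needs care is the comparison of finiteness notions in the second step, since the definition of the $1$-resolution property uses module-theoretic finite type. I expect it to be a routine quasi-compactness argument rather than a real obstacle.
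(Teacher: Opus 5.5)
Your proposal is correct and follows the paper's proof: apply point $(2)$ of \autoref{lemma: E_G generator} to see that $E_G$ generates the heart, then use \autoref{lemma: generators in Grothendieck abelian and finite type} to get a surjection from a finite sum of copies of $E_G$. Your extra quasi-compactness argument, showing that a quasi-coherent module of finite type is of finite type in the categorical sense, is correct and fills in a point the paper leaves implicit.
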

\begin{proof}
    Since \autoref{lemma: E_G generator} implies that the object $E_G\in\QCoh(X)^\heartsuit$ is a generator, an application of \autoref{lemma: generators in Grothendieck abelian and finite type} shows that every discrete quasi-coherent sheaf of finite type admits a surjection from a finite sum of copies of $E_G$.
    Since $E_G$ is a vector bundle by definition, the claim follows.
\end{proof}
\begin{corollary}
    Let $X$ be a  quasi-compact quasi-separated scheme and assume that the standard $t$-structure is generated by  a strict connective perfect complex.
    %Let $G\in\QCoh(X)_{\geq0}$ be a strict compact generator and let $E_G$ be the associated vector bundle.
    \begin{enumerate}
        \item If $X$ is a noetherian normal scheme which is J-2, then it is quasi-affine.
        \item If $X$ is a normal noetherian $\mb{Q}$-factorial scheme (for example, regular), then it is quasi-affine.
    \end{enumerate}
\end{corollary}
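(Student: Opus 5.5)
The plan is to reduce both statements to the $1$-resolution property and then quote known quasi-affineness criteria. By hypothesis the standard $t$-structure is generated by a strict connective perfect complex $G$. This means $\QCoh(X)_{\geq0}=\langle G\rangle_{\geq0}$ with $G$ represented by a bounded complex of vector bundles $0\to E_d\to\dots\to E_0\to 0$. Then \autoref{lemma: 1-resolution property} applies directly: the associated vector bundle $E_G=\bigoplus_i E_i$ is special, so $X$ has the $1$-resolution property. Note that we cannot use \autoref{remark: strict} here, since we have not assumed an ample family. This is exactly why the corollary assumes $G$ strict.

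It then remains, in each case, to pass from the $1$-resolution property to quasi-affineness. The cleanest route is to produce an ample family of line bundles and invoke \autoref{remark: proposition 3.4 paper}. The $1$-resolution property implies the resolution property, since $E_G^{\oplus n}$ is a vector bundle. By Totaro's theorem, a quasi-compact quasi-separated (noetherian) scheme with the resolution property has affine diagonal.

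In case (2), I would use the theorem of Brenner--Schröer: a noetherian normal $\mb{Q}$-factorial scheme with affine diagonal admits an ample family of line bundles. Combined with \autoref{remark: proposition 3.4 paper}, this gives that $X$ is quasi-affine. The regular case is included because regular noetherian schemes are locally factorial, hence $\mb{Q}$-factorial.

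In case (1), $\mb{Q}$-factoriality is not available, so the ample family cannot be produced this way. Instead I would use the corresponding result in \cite{deshmukh2020quasiaffineness1resolutionproperty}. As I recall, it proves directly that a noetherian normal J-2 scheme with the $1$-resolution property is quasi-affine, presumably by passing to the frame bundle of the special vector bundle and using the J-2 hypothesis to control normality and openness of loci. So the argument would be: $1$-resolution from \autoref{lemma: 1-resolution property}, then cite that theorem.

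The main obstacle is bookkeeping rather than mathematics: identifying the precise external statements that fit the hypotheses. These are the statement in \cite{deshmukh2020quasiaffineness1resolutionproperty} for normal noetherian J-2 schemes, and the Brenner--Schröer ample-family criterion together with Totaro's affine-diagonal result for the $\mb{Q}$-factorial case. I also need to check that their standing assumptions (noetherian, quasi-separated, normal) match ours. If the J-2 result in the literature were only stated under an ample-family assumption, I would instead try to show directly that $\det E_G$ together with the $1$-resolution property yields ampleness of $\mathcal{O}_X$ on the normal J-2 scheme. I expect that route to be substantially harder.
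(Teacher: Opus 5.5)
Your proposal is correct and matches the paper's proof. The paper likewise gets the $1$-resolution property from \autoref{lemma: 1-resolution property} and then, for the J-2 case, cites \cite[Theorem 3.13]{deshmukh2020quasiaffineness1resolutionproperty}; for the $\mb{Q}$-factorial case it cites \cite[Corollary 3.8]{deshmukh2020quasiaffineness1resolutionproperty} directly, whereas you unpack that step into Totaro's affine-diagonal result, the Brenner--Schr\"oer ample-family criterion and \autoref{remark: proposition 3.4 paper}, which is a valid argument.
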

\begin{proof}
    Standing \autoref{lemma: 1-resolution property}, point $(1)$ follows from \cite[Theorem 3.13]{deshmukh2020quasiaffineness1resolutionproperty} and point $(2)$ from \cite[Corollary 3.8]{deshmukh2020quasiaffineness1resolutionproperty}.
\end{proof}

We summarize our discussion with the following.
\begin{theorem}\label{theorem: main}
    Let $X$ be a quasi-compact quasi-separated scheme.
    Then $X$ is quasi-affine if and only if it admits an ample family of line bundles and the connective half $\QCoh(X)_{\geq 0}$ is compactly generated by a connective perfect object.
\end{theorem}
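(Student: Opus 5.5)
We prove the two implications separately.

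\emph{Reverse implication.} Assume $X$ admits an ample family of line bundles and $\QCoh(X)_{\geq0}=\langle G\rangle_{\geq0}$ for some connective perfect $G$.
\begin{enumerate}
    \item By \autoref{example: ample family implies resolution property}, $X$ has the resolution property.
    \item By \autoref{remark: strict}, $G$ is strict: it is represented by a bounded complex of vector bundles.
    \item We need this complex to live in homological degrees $0,\dots,d$. A connective perfect complex of vector bundles may a priori have terms in negative homological degrees, so we may have to replace the representative.
    \item Given such a representative, \autoref{lemma: 1-resolution property} shows that $X$ has the $1$-resolution property with special bundle $E_G$.
    \item \autoref{remark: proposition 3.4 paper} then gives that $X$ is quasi-affine.
\end{enumerate}

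The main obstacle is step (3). The plan is to work with a strict representative $0\to E_d\to\dots\to E_{-m}\to 0$ and kill the negative part inductively. Since $\pi_{-m}G=0$, the lowest differential $E_{-m+1}\to E_{-m}$ is surjective. Its kernel $K$ is locally a direct summand, since $E_{-m}$ is locally free, so $K$ is again a vector bundle. Replacing the last two terms by $K$ shortens the complex by one degree without changing its quasi-isomorphism class. Iterating until $m=0$ yields a representative concentrated in degrees $[0,d]$, as the Construction requires. Alternatively, one can observe that the proof of \autoref{lemma: E_G generator} only uses that each $E_i$ lies in $\QCoh(X)_{\geq0}$, but negative suspensions are not connective, so the truncation argument above is the safer route.

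\emph{Forward implication.} Let $X$ be quasi-affine. Then $\mathcal{O}_X$ is ample, as noted in the introduction, so $X$ has an ample family of line bundles. We show that $\mathcal{O}_X$, which is connective and perfect, generates $\QCoh(X)_{\geq0}$. Let $A=\Gamma(X,\mathcal{O}_X)$, and let $j\colon X\hookrightarrow \operatorname{Spec}A$ be the quasi-compact open immersion. The functor $j^*$ is a $t$-exact localization $\Mod_A\to\QCoh(X)$ with fully faithful right adjoint $j_*$. Every $F\in\QCoh(X)_{\geq0}$ satisfies $F\simeq j^*j_*F$. Although $j_*F$ need not be connective, we have $F\simeq j^*\tau_{\geq0}j_*F$, because $j^*$ is $t$-exact and $F$ is connective. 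Since $\Mod_{A,\geq0}=\langle A\rangle_{\geq0}$ and $j^*$ preserves colimits and extensions with $j^*A=\mathcal{O}_X$, we conclude $F\in\langle\mathcal{O}_X\rangle_{\geq0}$. The reverse inclusion $\langle\mathcal{O}_X\rangle_{\geq0}\subseteq\QCoh(X)_{\geq0}$ holds because $\QCoh(X)_{\geq0}$ contains $\mathcal{O}_X$ and is closed under extensions and small colimits.
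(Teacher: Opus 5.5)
Your proposal is correct and follows the paper's route: ample family $\Rightarrow$ resolution property $\Rightarrow$ strict generator $\Rightarrow$ $1$-resolution property $\Rightarrow$ quasi-affine via Deshmukh--Hogadi--Mathur, and conversely $\mathcal{O}_X\simeq j^*A$ generates through the $t$-exact localization $j^*$. Your step (3) replaces the surjective lowest differential with its locally split, hence locally free, kernel; this is a worthwhile addition, because the paper's Construction and the proof of \autoref{lemma: E_G generator} tacitly assume the strict representative sits in degrees $[0,d]$, and your $\tau_{\geq0}j_*F$ argument likewise just spells out the paper's one-line forward direction.
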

\begin{proof}
    Let $X$ be quasi-affine and let $\Gamma(X,\mathcal{O}_X)$ denote the ordinary ring of global sections of the structure sheaf.
    Then the canonical map $j:X\into\text{Spec}(\Gamma(X,\mathcal{O}_X))$ is an open immersion.
    It follows that $j^*:\Mod_{\Gamma(X,\mathcal{O}_X)}\to\QCoh(X)$ is a $t$-exact localization, and hence $\mathcal{O}_X\simeq j^*(\mb{1}_{\Gamma(X,\mathcal{O}_X)})$ compactly generates $\QCoh(X)_{\geq0}$. 
    Since the monoidal unit is ample, one direction follows.
    
    Conversely, assume that $\QCoh(X)_{\geq0}$ is compactly generated by a connective perfect object. 
    Since $X$ admits an ample family of line bundles, this  connective perfect object is strict by \autoref{example: ample family implies resolution property} and \autoref{remark: strict}.
    By \autoref{lemma: 1-resolution property} the scheme $X$ satisfies the $1$-resolution property, and since it has an ample family of line bundles, \autoref{remark: proposition 3.4 paper} implies that $X$ is quasi-affine.
\end{proof}

\section{Final observations}\label{section: examples}
The main result recovers an \enquote{horrible discovery}.
\begin{example}
    In \cite[Theorem A.1]{bhaduri2025nonexistencesinglycompactlygenerated} it is stated that  a smooth projective curve over the complex numbers has connective half which is not singly compactly generated. 
    This is an example of a scheme with an ample family of line bundles (and actually, a single one), for which \autoref{theorem: main}  applies. 
    Notice furthermore that \autoref{theorem: main} provides a different proof (since \cite[Theorem A.1]{bhaduri2025nonexistencesinglycompactlygenerated} relies on the Harder-Narasimhan filtration).
\end{example}
%Notice that \autoref{theorem: main} is also far from being optimal: \cite[Proposition 3.7]{bhaduri2025nonexistencesinglycompactlygenerated} shows that algebraic spaces proper over a field with singly compactly generated connective half are precisely the artinian affine schemes.
%It raises the question: are there quasi-compact quasi-separated schemes with singly compactly generated connective half which are not quasi-affine?
One question remains: are there quasi-compact quasi-separated schemes with singly compactly generated connective half which are not quasi-affine?

% Bibliography
\bibliographystyle{alpha}
\bibliography{Bibliography}
\end{document}

%% file: Comands.tex
\theoremstyle{definition}
\newtheorem{theorem}{Theorem}[section]
\newtheorem{definition}[theorem]{Definition}

\newtheorem{corollary}[theorem]{Corollary}
\newtheorem{lemma}[theorem]{Lemma}
\newtheorem{remark}[theorem]{Remark}

\newtheorem{example}[theorem]{Example}

\newtheorem{construction}[theorem]{Construction}

\newtheorem*{theorem*}{Theorem}
\newtheorem*{corollary*}{Corollary}
\newtheorem*{definition*}{Definition}
\newtheorem*{remark*}{Remark}
\newtheorem*{question*}{Question}

\newcommand{\rom}[1]{\uppercase\expandafter{\romannumeral #1\relax}}

\makeatletter
\@ifpackageloaded{hyperref}%
  {\newcommand{\mylabel}[2]% #1=name, #2 = contents
    {\protected@write\@auxout{}{\string\newlabel{#1}{{#2}{\thepage}%
      {\@currentlabelname}{\@currentHref}{}}}}}%
  {\newcommand{\mylabel}[2]% #1=name, #2 = contents
    {\protected@write\@auxout{}{\string\newlabel{#1}{{#2}{\thepage}}}}}
\makeatother

\usepackage{enumitem}
\newenvironment{enumerate*}
  {\begin{enumerate}[label=(\arabic*),topsep=-\parskip+\jot,itemsep=-\parskip+\jot]}
  {\end{enumerate}}

\usepackage{bbm}
\let\mb\mathbbm

\usepackage[utf8]{inputenc}
\DeclareFontFamily{U}{min}{}
\DeclareFontShape{U}{min}{m}{n}{<-> udmj30}{}

\DeclareMathOperator\Hom{Hom}

\def\into{\hookrightarrow}

\def\susp{\Sigma}

\DeclareMathOperator\colim{colim}

\def\phi{\varphi}
\def\epsilon{\varepsilon}

\def\QCoh{\text{QCoh}}

\def\Mod{\text{Mod}}

%% file: Bibliography.bib
@book{Lurie-HA,
	title        = {Higher Algebra},
	author       = {Jacob Lurie},
	year         = 2017,
	month        = {September},
	publisher    = {Unpublished},
	note         = {Available online at \href{https://www.math.ias.edu/~lurie/papers/HA.pdf}{Higher Algebra}}
}

@book{Lurie-SAG,
	title        = {Spectral Algebraic Geometry},
	author       = {Jacob Lurie},
	year         = 2018,
	journal      = {preprint},
	publisher    = {Unpublished},
	note         = {Available online at \href{https://www.math.ias.edu/~lurie/papers/SAG-rootfile.pdf}{Spectral Algebraic Geometry}}
}

@article{stacks-project,
	title        = {\textit{Stacks Project}},
	author       = {The Stacks Project Authors},
	year         = {2018},   
	note      = {Available online at \href{https://stacks.math.columbia.edu}{The Stacks Project}},
}

@misc{alonso2002constructiontstructuresequivalencesderived,
      title={Construction of t-structures and equivalences of derived categories}, 
      author={Leovigildo Alonso and Ana Jeremias and Ma. -Jose Souto},
      year={2002},
      eprint={math/0203009},
      archivePrefix={arXiv},
      primaryClass={math.RT},
      url={https://arxiv.org/abs/math/0203009}, 
}

@misc{bondal2002generatorsrepresentabilityfunctorscommutative,
      title={Generators and representability of functors in commutative and noncommutative geometry}, 
      author={Alexei Bondal and Michel Van den Bergh},
      year={2002},
      eprint={math/0204218},
      archivePrefix={arXiv},
      primaryClass={math.AG},
      url={https://arxiv.org/abs/math/0204218}, 
}

@misc{neeman2024boundedtstructurescategoryperfect,
      title={Bounded t-structures on the category of perfect complexes}, 
      author={Amnon Neeman},
      year={2024},
      eprint={2202.08861},
      archivePrefix={arXiv},
      primaryClass={math.AG},
      url={https://arxiv.org/abs/2202.08861}, 
}

@misc{neeman2025triangulatedcategoriessinglecompact,
      title={Triangulated categories with a single compact generator and two Brown representability theorems}, 
      author={Amnon Neeman},
      year={2025},
      eprint={1804.02240},
      archivePrefix={arXiv},
      primaryClass={math.CT},
      url={https://arxiv.org/abs/1804.02240}, 
}

@misc{deshmukh2020quasiaffineness1resolutionproperty,
      title={Quasi-affineness and the 1-Resolution Property}, 
      author={Neeraj Deshmukh and Amit Hogadi and Siddharth Mathur},
      year={2020},
      eprint={1809.05270},
      archivePrefix={arXiv},
      primaryClass={math.AG},
      url={https://arxiv.org/abs/1809.05270}, 
}

@misc{bhaduri2025nonexistencesinglycompactlygenerated,
      title={Nonexistence of singly compactly generated $t$-structures for schemes}, 
      author={Anirban Bhaduri and Timothy De Deyn and Michal Hrbek and Pat Lank and Kabeer Manali-Rahul},
      year={2025},
      eprint={2511.01622},
      archivePrefix={arXiv},
      primaryClass={math.AG},
      url={https://arxiv.org/abs/2511.01622}, 
}

@article{article,
author = {Garkusha, Grigory},
year = {1999},
month = {10},
pages = {2002},
title = {Grothendieck Categories},
volume = {13},
journal = {St Petersburg Mathematical Journal}
}
